\newcommand{\Z}{\mathbb Z}
\newcommand{\N}{\mathbb N}
\numberwithin{equation}{section}
\newtheorem{theorem}[equation]{Theorem}
\newtheorem{corollary}[equation]{Corollary}
\theoremstyle{definition}
\newtheorem{definition}[equation]{Definition}
\newtheorem{question}[equation]{Question}
\theoremstyle{remark}
\begin{document}

\title[Symbolic and Borel work of M. Hochman]{The work of Mike Hochman  on multidimensional symbolic dynamics  and Borel dynamics}


\author[M. Boyle]{Mike Boyle}
\thanks{This
  article is based on the talk I gave on the occasion of Mike
Hochman's being awarded the  Brin Prize. 
I am grateful to Nishant Chandgotia, Emmanuel Jeandel, Doug Lind and
Anush Tserunyan 
for very helpful feedback.}

\address{
Department of Mathematics, University of Maryland, College
Park, MD 20742-4015, USA
}
\email{mmb@math.umd.edu}

\subjclass[2010]{Primary 37A35; Secondary 37A05, 37B50, 54H20}



\keywords{Brin Prize, Borel dynamics, almost Borel, multidimensional symbolic dynamics, 
  shift of finite type, generator} 

\begin{abstract}
  We review the impact of Mike Hochman's work on
mutlidimenisonal
symbolic dynamics and Borel dynamics.
\end{abstract}

\maketitle





\section{Introduction}



In an ancient story, several blind men encounter an elephant.
One feels the ears, one the trunk, one the tusks, and so on. 
All features are dramatic and important. 
The blind men have quite different opinions  about what makes an elephant.

Mike Hochman's research is like the elephant -- 
with several dramatic features, quite varied; 
including important work on 
  fractals, 
 multidimensional symbolic dynamics,
  Borel dynamics,  
  ergodic theory and more.
  In this short note, I only look quickly at parts of two features of
  Hochman's elephant -- his 
  most important work (so far) in 
  multidimensional symbolic dynamics\begin{footnote}{There is 
    other  excellent work on multidimensional symbolic dynamics.}\end{footnote}
and Borel dynamics. 

%

\section{Multidimensional symbolic dynamics} 
I'll focus on the content and impact of the papers  
   \cite{HM} (joint with  Tom Meyerovitch) 
  and \cite{H1}. 

  \subsection{Definitions}

  ``Multidimensional symbolic dynamics''
refers here to $\Z^d$ subshifts, $(X,\sigma )$ ,
$d=2,3,...$ .
Here $X$ is a closed, shift invariant subset of 
$A^{\Z^d}$, for some finite alphabet $A$, with  the shift action by
$\Z^d$: for $v\in \Z^d$, $(\sigma^v x)(n) = x(n+v)$.
$A^{\Z^d}$ has the product topology of the discrete topology on $A$, and 
$X$ has the relative topology.  
An element $x$ of $X$ is a function from $\Z^d$ into some finite alphabet $A$;  
$x$ is visualized as a way of filling the $\Z^d$ lattice with symbols from $A$.

Let $C_n(x)$ be the restriction of $x$ to
the cube $\{ 0, 1, ... , n-1\}^d$.
The $\Z^d$ topological entropy of $(X, \sigma )$ is
\[
\lim_n (1/n^d) \log | \{ C_n (x) : x \in X \}| \ .
\]
$X$ is a $\Z^d$ shift of finite type (SFT) if there is some positive integer
$n$ and some finite set $F$ such that $X$ is the subset of 
$A^{\Z^d}$ consisting of the $x$ such that for all $v$ in $\Z^d$,
$C_n(\sigma^vx) \notin F$.  When $X= A^{\Z^d}$, the SFT is the
full $\Z^d$ shift on $|A|$ symbols. 

A $d$ dimensional cellular automaton is a continuous
shift-commuting map from a full $\Z^d$ shift
into itself.

\subsection{Before Hochman and Meyerovitch.}

Let's sketch the main features of the state of knowledge of the
possible entropies of $\Z^d$ SFTs before the Hochman-Meyerovitch paper \cite{HM}:   

\begin{enumerate}
\item 
  ($d=1$) 
  The set of possible topological entropies of a $\Z$ SFT
  was known to be the set of
  logarithms  of an easily understood class of algebraic integers
  \cite{LindPerron1984}.

\item 
  For $d>1$,  a
  limited  collection of  $\Z^d$ SFT entropies had been computed exactly,   
  by (highly nontrivial) work of mathematical physicists (e.g.
  \cite{Baxter,Kasteleyn1961,Lieb}). 

%
  
\item For $d>1$,
  it was well known that there can be no algorithm which
  takes a finite presentation of 
   a general 
$\Z^d$ SFT or $d$-dimensional cellular automata
  and 
  computes
  its entropy (or, roughly speaking, any nontrivial property--see
      e.g. \cite{KariRice94,Kari16}).
 
\end{enumerate}


%
%
%

\subsection{The main theorems} 
     
\begin{theorem} \label{HM} \cite{HM}  
Suppose $h$ is a nonnegative real number 
and $d$ is an integer, $d\geq 2$.
Then the following are equivalent. 
\begin{enumerate}
\item $h$ is the topological entropy of a $\Z^d$ shift of finite type.

\item There is a Turing machine which produces a
 sequence of nonnegative numbers
 $h_n$ such that $h=\inf h_n$.
\end{enumerate} 
\end{theorem}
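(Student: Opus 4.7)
The direction $(1)\Rightarrow(2)$ is routine. Given an SFT $(X,\sigma)$ presented by a finite set $F$ of forbidden $n_0$-patterns, the language $L_n=\{C_n(x):x\in X\}$ is computable for each $n$: one enumerates the $|A|^{n^d}$ candidate $n$-patterns and discards those containing a translate of an element of $F$. By subadditivity of $n\mapsto \log|L_n|$, the entropy equals $\inf_n n^{-d}\log|L_n|$, so setting $h_n := n^{-d}\log|L_n|$ exhibits $h$ as the required infimum of a Turing-computable sequence.

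For the substantive direction $(2)\Rightarrow(1)$, the plan is to realize a given right-computable $h$ as the entropy of a $\Z^2$ SFT. The case $d\geq 3$ will then follow by a simple lifting: if $X$ is a $\Z^2$ SFT of entropy $h$, the $\Z^3$ SFT $Y=\{y\in A^{\Z^3}:y|_{\Z^2\times\{k\}}\in X\text{ for every }k\in\Z\}$, obtained by placing independent copies of $X$ along the third axis, is again an SFT and satisfies $|C_n(Y)|=|C_n(X)|^n$, hence has $\Z^3$ entropy $h$; iterating covers all $d\geq 2$.

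The $\Z^2$ SFT will be built over an alphabet that factors into three superimposed tracks. The first track is a hierarchical self-similar substitution system, in the spirit of Robinson's aperiodic tileset, which forces every valid configuration to be partitioned into nested squares of sizes $N_k\to\infty$, with each $N_k$-square subdivided into $N_{k-1}$-squares, and so on down. The second track carries a Turing-machine simulation inside each $N_k$-square, with inputs, scratch tape, and clock routed along the scaffolding of the first track; the simulation runs long enough to produce an approximation $h_{n(k)}$ of $h$ with $n(k)\to\infty$. The third is a ``free-symbol'' track over a large alphabet, whose contents inside each $N_k$-square are constrained by the output of the second track so that only about $2^{h_{n(k)} N_k^2}$ patterns are admissible per block. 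The first two tracks are designed to contribute zero entropy, since the hierarchical structure is (essentially) rigid and the computation is deterministic given its inputs; the third contributes approximately $h_{n(k)}$ per unit area at scale $N_k$, and taking the infimum along the hierarchy yields total entropy $h$. The matching lower bound follows by exhibiting explicit configurations that saturate the free-symbol count inside each block.

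The principal obstacle is the bookkeeping. One must engineer the substitution layer so that a full Turing machine can be reliably simulated at every scale $N_k$, with local rules working uniformly across all levels of the hierarchy, and one must verify that the cells devoted to substitution and computation symbols, together with the boundary strips between adjacent hierarchical blocks, all have density tending to $0$ as $k\to\infty$, so they contribute no residual entropy. Finally, the per-block cap imposed by the Turing output must translate into a genuine global upper bound on the count of allowed configurations, which requires a careful independence argument across the blocks of the partition. The elegance of the Hochman--Meyerovitch construction lies in combining a robust self-simulating hierarchy with a quantitative encoding of $h=\inf h_n$ into the local rules of the SFT.
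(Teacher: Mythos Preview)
The paper under review is a survey and does not itself prove this theorem; it only remarks that $(1)\Rightarrow(2)$ is ``not hard'' and that the constructions for $(2)\Rightarrow(1)$ build on Mozes's realization of substitution tilings as $\Z^2$ SFTs, which in turn rests on Robinson's aperiodic tileset. Your outline of $(2)\Rightarrow(1)$ is faithful to the actual Hochman--Meyerovitch construction---a Robinson-type hierarchical scaffold, a Turing-simulation layer, and a free-symbol layer whose density is throttled by the computed upper bounds $h_{n(k)}$, with the first two layers contributing zero entropy---and your lifting to $d\geq 3$ by stacking independent layers is the standard device. So on the hard direction you are aligned with the source.

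There is, however, a real slip in your $(1)\Rightarrow(2)$ argument. You define $L_n=\{C_n(x):x\in X\}$, the \emph{globally} admissible $n$-patterns, and then claim this set is computed by discarding candidates that contain a forbidden block. For $d\geq 2$ that procedure yields only the \emph{locally} admissible patterns, and deciding global admissibility is in general undecidable---indeed, this undecidability is exactly the ``swamp'' the survey refers to. The repair is to work with the locally admissible counts $|L_n^{\mathrm{loc}}|$ throughout: these are computable, they dominate $|L_n|$, and one still has $h=\inf_n n^{-d}\log|L_n^{\mathrm{loc}}|$. (The infimum formula comes from coordinatewise subadditivity of box-pattern counts together with the multiparameter Fekete lemma; this is also what is needed to make your phrase ``subadditivity of $n\mapsto\log|L_n|$'' precise, since for $d\geq 2$ that one-variable map is not literally subadditive.)
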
 
The countable class of numbers in (2) 
properly contains the nonnegative real numbers $h$
which can be algorithmically approximated to arbitrary precision. 
For example:  
 $\pi, e, \gamma $,   algebraic numbers ... 
But a general procedure for estimating numbers  in this class 
can  only only produce converging upper bounds -- no general procedure
will give  an error bound for the estimate.

Hochman's paper \cite{H1} showed Theorem \ref{HM}
to be part of a systematic approach. Here is an example.

\begin{theorem} \label{H1a}  \cite{H1} 
 Suppose $h$ is a nonnegative real number  
 and $d\geq 3$ is an integer\begin{footnote}{This theorem was later extended
     to  $d=2$ in the papers \cite{AubrunSablik, DRS2010}, and then to
      $d=1$ in \cite{GuillonZinoviadis}.}\end{footnote}.
The following are equivalent. 

\begin{enumerate} 
\item $h$ is the topological entropy of a $\Z^d$ cellular automaton. 

\item  There is a Turing machine which produces a
 sequence of nonnegative numbers
$h_n$ such that $h=\liminf h_n$. 

\end{enumerate}
\end{theorem}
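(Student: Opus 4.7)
The plan is to prove both directions, with (1) $\Rightarrow$ (2) a computability check on the CA's finite local rule and (2) $\Rightarrow$ (1) a substantive construction parallel to Theorem \ref{HM}.

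For (1) $\Rightarrow$ (2), let $T$ be a cellular automaton on $A^{\Z^d}$ with neighborhood radius $r$. Because $T$ has a finite local rule, the number $N(n,k)$ of distinct space-time blocks that arise on $\{0,\dots,n-1\}^d \times \{0,\dots,k-1\}$ as $x$ ranges over $A^{\Z^d}$ is computable: each such block is determined by the initial pattern on $\{0,\dots,n+2rk-1\}^d$, and one simply enumerates these finitely many patterns and runs $T$ for $k$ steps. The topological entropy of $T$ is
\[
h(T) \;=\; \lim_{n \to \infty} \lim_{k \to \infty} \frac{1}{k} \log N(n,k),
\]
and enumerating the doubly-indexed family $(1/k)\log N(n,k)$ along a computable diagonal, so that both indices tend to infinity, gives a computable sequence whose $\liminf$ equals $h(T)$.

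For (2) $\Rightarrow$ (1), given a Turing machine that produces $h_n$ with $h = \liminf h_n$, I would build the CA in two stages. First, from the techniques underlying Theorem \ref{HM}, I would obtain for each ``level'' $n$ an SFT-like combinatorial scaffold whose spatial complexity is approximately $h_n$, with the level $n$ indexed by the output of the Turing machine. Second, I would design a $\Z^d$ CA $T$ whose cells are partitioned into a computation layer that simulates the Turing machine (producing $h_1,h_2,\dots$ in succession) and a ``payload'' layer whose configurations at epoch $n$ are constrained to realize the $n$-th HM-style scaffold. The CA entropy is then read off by tracking how the payload complexity varies with time, and one arranges that the complexities at successive epochs are exactly the $h_n$.

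The main obstacle, and the source of the jump from $\inf$ in Theorem \ref{HM} to $\liminf$ here, is the clocking: unlike an SFT, a CA has an intrinsic time direction along which orbits measure complexity \emph{dynamically}. Realizing $\liminf h_n$ rather than $\inf h_n$ requires that low values of $h_n$ appear infinitely often as genuine orbit-complexity readings, while the intermediate ``computation in progress'' steps are padded so as not to depress the overall topological entropy below $h$. The hypothesis $d \geq 3$ should enter precisely here, as the minimum spatial dimension in which the Turing-machine simulation, the HM-style substitution scaffolding, and the synchronizing clock can all be embedded along orthogonal directions of the CA; the later $d=2$ and $d=1$ extensions cited in the footnote indicate this is an artifact of the specific construction rather than an intrinsic obstruction.
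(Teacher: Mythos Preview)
The paper (a survey) does not give a full proof; it says $(1)\Rightarrow(2)$ is ``not hard'' and that $(2)\Rightarrow(1)$ rests on constructions whose starting point is the Mozes/Robinson realization of substitution tilings as $\Z^2$ SFTs. More importantly, it stresses that Theorem~\ref{H1a} is not proved directly as a statement about cellular automata but is a \emph{consequence} of the central result of \cite{H1}: the simulation of effective subshifts by $\Z^d$ shifts of finite type for $d\geq 3$. Your proposal never mentions this simulation theorem, and that is the real gap. Your layered-CA picture for $(2)\Rightarrow(1)$ (computation layer, payload layer, clock) is a plausible heuristic, but the phrases ``I would build'' and ``one arranges'' stand in for exactly the hard content; in Hochman's approach one does not construct the CA by hand, one first proves the simulation theorem (self-similar tilings carrying Turing computations that enforce the effective constraints), and the CA entropy statement is then derived from it. Your reading of the hypothesis $d\geq 3$ as ``three orthogonal slots for three gadgets'' is correspondingly off the mark.

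For $(1)\Rightarrow(2)$ your outline is right in spirit, but the final step is not justified: an arbitrary ``computable diagonal'' through the array $(1/k)\log N(n,k)$ need not have $\liminf$ equal to $\sup_n\inf_k(1/k)\log N(n,k)$; which diagonal you choose matters. A clean fix is to observe that each $\inf_k(1/k)\log N(n,k)$ is right-computable and that these values increase in $n$ to $h(T)$, so $h(T)$ is a $\Sigma_2$ real, and $\Sigma_2$ reals are exactly the $\liminf$s of computable sequences of rationals.
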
 

For $h= \inf h_n$, from the sequence $(h_n)$
you at least get at any finite input an upper bound to $h$. 
For $h= \liminf h_n$, from any finite set of terms  from
the sequence 
$(h_n)$ you obtain no bound at all on $h$.

For Theorems \ref{HM} and \ref{H1a}, 
the proof that (1) implies  (2) is not hard.
So, there were two difficulties.  Initially, 
to have the audacity to imagine the
theorems. Then, 
to  produce actual constructions 
showing  $(2) \implies (1)$. 
A starting point for these constructions 
was the  1989  paper \cite{Mozes} of Shahar Mozes, 
which used ideas from
 Raphael Robinson's  
1971 paper \cite{Robinson71} to construct  
 $ \Z^2$ SFT presentations for many planar substitution tilings. 

Theorem \ref{H1a} is dramatic and easily stated, but 
the paper \cite{H1} fundamentally is about simulating
effective subshifts with $\Z^d$ shifts of finite type ($d\geq 3$). 
For example, in \cite{H1} this   gives (for $d\geq 3 $) a 
recursion theoretic characterization of the possible  dynamics
of a $d$-dimensional cellular automaton on its limit set  
(up to a modest equivalence relation). Simulation 
results for $d\geq 3$ were generalized to $d=2$ (optimal)
in the papers \cite{AubrunSablik, DRS2010}. 

\subsection{Impact of the papers} 

Before \cite{H1,HM},
workers were very aware of 
the \lq\lq swamp of undecidability\rq\rq\begin{footnote}{This 
  section title from  Lind's 2004 survey
  \cite{LindSurvey2004} reflected the zeitgeist.}\end{footnote} 
   as an obstacle to proving something (perhaps anything)
general and   interesting  
about multidimensional cellular automata and SFTs. 
The papers \cite{H1,HM}  did \lq\lq mathematical judo\rq\rq\ on recursion theory: 
making it a friend instead of an enemy. It seems fair to call this a paradigm shift. 
The papers offered   a blueprint for characterizing the
range of invariants of $\Z^d$ SFTs (or  sofic shifts, or
effective shifts):
\begin{enumerate}
  \item 
 Find the \lq\lq obvious\rq\rq\ recursion theoretic obstruction. 
\item  Make constructions to prove there is no other obstruction.
  \end{enumerate} 
Work  by various people 
has since been carried out in this vein, 
to characterize for example the possible 
entropy dimensions of $\Z^d$ SFTs, for $d\geq 3$
\cite{Meyerovitch2010};
periods of of $\Z^d$ SFTs, for $d\geq 2$
\cite{JeandelVanier2015}; 
sets of expansive directions for $\Z^2$ SFTs
\cite{Z2016}; 
sets of limit measures of cellular automata 
\cite{delacourtdemenibus, deMenibusSablik}; 
and so on. 
%
%
%
%
 It now seems that to a large extent
 the   landscape of possibilities for 
 {\it general}  
multidimensional SFTs (or sofic shifts, or effective shifts) 
has a  recursion theoretic description.\begin{footnote}{I 
emphasize {\it general}. Our understanding  of 
constraints in special classes remains poor;
additional constraints do arise
(see e.g.  \cite{PavlovSchraudner2015}). Moreover, 
 McGoff and Pavlov \cite{McgoffPavlov16}
 suggest, and prove cases of, an interesting heuristic:
 for $d\geq 2$, 
where general $\Z^d$ SFTs may be badly behaved, 
in a precise sense 
\lq\lq typical\rq\rq\ $\Z^d$ SFTs can be  well
behaved.}\end{footnote}
\lq\lq Effective subshifts\rq\rq\  
have also been considered
for more general groups 
\cite{AubrunBarbieriSablik}.  
For more, see the surveys \cite{Hsurvey,Jeandel}
of Hochman and  Jeandel.

The papers  \cite{H1,HM}  
also furthered 
the rich interdisciplinarity of the
multidimensional symbolic dyamics area, 
as it spurred contributions from 
logicians and experts in
recursive tiling constructions  
(e.g. \cite{DurandRomaschenko2017,DRS2010,Simpson2014}).

  \section{Borel dynamics} 

\subsection{Definitions} 

  A standard Borel space is a pair $(X,\Sigma )$ such that
  $X$ is a set, and $\Sigma$ is the $\sigma$-algebra (the Borel
  $\sigma$-algebra) generated
  by the open sets defined by a given complete, separable metric on $X$.
  A Borel set is an element of $\Sigma$. 
  We usually suppress the $\sigma$-algebra from the notation. 
  A morphism in the Borel category
  is a map for which the inverse image
  of every Borel set is a Borel set.
  Standard Borel spaces of equal cardinality are isomorphic.

In this article, a Borel system $(X,T)$ is a Borel automorphism $T\colon X\to X$
 of a standard  Borel space. (So, we restrict to $\Z$ actions.) 
  We say  a Borel system $(X,T)$ is free if the $\Z$ action  generated by $T$ 
  is free, i.e., $T$ has no periodic point.

 A {\it null
  set} for a Borel system  $(X,T)$  is a set which has
 measure zero for every $T$-invariant Borel probability. 
 A {\it full set} is the complement of a null set.
 $\mathcal M(T)$ is the set of $T$-invariant Borel probabilities.

 For a Borel system $(X,T)$, pick a countable collection of
 sets generating the Borel $\sigma$-algebra; if 
 $(X,T)$ comes with a topology, the countable collection is
 chosen to be a basis for the topology. We say a point in $X$ is
 {\it generic} for a measure $\mu$ in $ \mathcal M(T)$ if under $T$ and $T^{-1}$ it 
 it visits all sets in the countable collection with ergodic-theorem
 frequencies. Let $X_g$ be the union over $\mu$ in $\mathcal M(T)$
 of the $\mu$-generic points. Let $X_p$ be the set of $T$-periodic points,
 a subset of $X_g$. Set $X_d = X\setminus X_g$. 

 A Borel system $(X,T)$ is the disjoint union of subsystems given
 by restriction to $X_p$, $X_g \setminus X_p$ and $X_d$. 
 Then $X_g$ is a $T$-invariant full set, and
$X_g \setminus X_p$ supports all nonatomic measures in $\mathcal M(T)$. 
 A complete Borel invariant
 for $(X_p,T)$ is simply the function giving the cardinality of
 the set of orbits of size $n$, for $n\in \N$. So, understanding
 of the Borel system amounts to understanding the free systems
 $(X_g\setminus X_p,T)$ and $(X_d,T)$. By ergodic decomposition,
 the system $(X_d,T)$ supports no $T$-invariant Borel probability.

\subsection{Borel dynamics off null sets, before Hochman}     

  It is fundamental in dynamics to consider a homeomorphism $T\colon X\to X$
  with respect to some  $T$-invariant Borel probability $\mu$,
  neglecting the sets of $\mu$ measure zero. More ambitiously,
  one could try to understand $T$ simultaneously with respect to
  a large subset of $\mathcal M(T)$.  
  In this spirit, motivated by his study of low dimensional piecewise smooth dynamics,
  Buzzi proposed the following definition.\begin{footnote}
  {In a  similar spirit,
    Rufus Bowen in his 1973 classic
    \cite{Bowen1973} introduced a closely related notion, which
    he also called \lq\lq entropy conjugacy\rq\rq . However, 
    a map implementing Bowen's  entropy conjugacy is required
    to be continuous, not only Borel, on a suitably large set.}
  \end{footnote}

\begin{definition} \cite{Buzzi97}  Suppose $ S,T$ are Borel systems with  
 equal and finite topological entropy $h$.
Then $S$ and $T$  are {\it entropy conjugate} if for some $\epsilon >0$
they are isomorphic
modulo sets of measure zero for all ergodic invariant Borel probabilities
of entropy $\geq h-\epsilon $.
\end{definition}

Buzzi showed certain piecewise smooth systems of equal entropy are entropy
conjugate to (countable state) Markov shifts,
and asked whether recurrent Markov shifts of
equal entropy are entropy conjugate. 
Mixing shifts of finite type of equal entropy are entropy conjugate,
as a consequence of the Adler-Marcus Theorem \cite{AdlerMarcus1979}.
By different explicit codings, mixing strongly positive recurrent (SPR) 
Markov shifts\begin{footnote}{The
 SPR shifts are the natural subclass of positive
  recurrent Markov shifts whose properties most
  closely resemble those of finite state irreducible Markov shifts. 
They are the positive recurrent Markov shifts for which the measure of
maximal entropy is exponentially
recurrent.   
  For  characterizations of this
  class in terms of subsystems
  and local zeta functions,
  see \cite[Proposition 2.3 and the definition (4) above it]{BBG2006}.}\end{footnote}
of equal finite entropy were shown to be entropy conjugate
\cite{BBG2006}. 
Entropy conjugacy was not
understood even for general positive recurrent Markov shifts. 

  \subsection{Borel dynamics on full sets, after Hochman} 

%
%
%

To understand the impact of  Hochman's paper \cite{H2} on this topic, 
we begin with some  definitions.  
 For $t>0$, let $\mathcal B_t$ be the collection of Borel systems
which are free  and have no invariant
Borel probability of entropy $\geq t$.
A Borel system $(X,T)$ is strictly $t$-universal if
it is in $\mathcal B_t$ and  every
member of $\mathcal B_t$ embeds
modulo null sets to a subsystem of $(X,T)$. 
The entropy of a Borel system $(X,T)$
  is defined to be the supremum of $h_{\mu}(T)$, over $T$-invariant Borel probabilities $\mu$.

  For a Borel system $(X,T)$, 
form $X'$ by removing periodic points and points generic for any
ergodic measure of maximal entropy.

\begin{theorem} \label{theoremH2}
      \cite{H2}  Suppose a Borel system $(X,T)$ of finite
       entropy $t>0$ contains
mixing shifts of finite type with entropies arbitrarily close to $t$. 
\begin{enumerate}
\item Then $(X',T)$ is strictly $t$-universal.

\item Strictly $t$-universal systems are isomorphic mod null sets.
\end{enumerate} 
\end{theorem}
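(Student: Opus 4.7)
\medskip

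\noindent\textbf{Proof proposal.} The plan is to reduce the theorem to a Borel, multi-measure version of the Krieger generator/embedding theorem, and then dispatch uniqueness by a Cantor--Bernstein argument.

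For part (1), I would first verify that $(X',T)$ lies in $\mathcal B_t$. Freeness is automatic because the periodic points have been removed. To see that no $T$-invariant Borel probability on $X'$ has entropy $\ge t$: by the ergodic decomposition and affineness of entropy, such a measure would have an ergodic component of entropy $\ge t$, hence (since the total entropy of $(X,T)$ is $t$) an ergodic component $\mu$ of entropy exactly $t$; but the $\mu$-generic points have been deleted in passing to $X'$, so $\mu(X') = 0$.

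The heart of (1) is the embedding statement: every $(Y,S)\in\mathcal B_t$ embeds modulo null sets into $(X',T)$. Let me sketch the strategy. Fix a decreasing sequence $\epsilon_n\downarrow 0$, and pick mixing SFTs $\Sigma_n\subset X$ with $h_{\mathrm{top}}(\Sigma_n)\uparrow t$, say $h_{\mathrm{top}}(\Sigma_n)>t-\epsilon_n$. For each ergodic $S$-invariant probability $\mu$ on $Y$ we have $h_\mu(S)<t$, so $h_\mu(S)<h_{\mathrm{top}}(\Sigma_n)$ for all sufficiently large $n$, and the classical Krieger embedding theorem provides (after dealing with periods, which is why mixing is crucial) a measure-theoretic embedding of $(Y,S,\mu)$ into $\Sigma_n$. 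The real work is to upgrade this measure-by-measure statement to a single Borel map defined on a full subset of $Y$, simultaneously for every ergodic invariant measure. For this I would use the Borel machinery that undergirds Hochman's paper: free Borel $\Z$-actions admit Weiss-style marker sets of arbitrarily sparse gaps, which permit the construction of Borel Rokhlin towers with prescribed combinatorial complexity. Combining such markers with a Shannon--McMillan--Breiman style frequency estimate, one produces a Borel coding of $(Y,S)$ into the alphabet of some $\Sigma_n$ whose output, off a universally null set, lies in $\Sigma_n$ and is injective. Since the image lies in the mixing SFT $\Sigma_n\subset X$ and no invariant measure on the image has entropy $\ge t$, the image is disjoint (mod null) from $X\setminus X'$, giving the desired embedding into $(X',T)$.

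For part (2), let $(X_1,T_1)$ and $(X_2,T_2)$ be strictly $t$-universal. By definition each is in $\mathcal B_t$, so applying the universality of the other we obtain Borel embeddings $\varphi_i\colon X_i \hookrightarrow X_{3-i}$ mod null sets, each equivariant with the respective shifts. The situation is then formally that of Cantor--Schroeder--Bernstein in the category of free Borel $\Z$-systems mod null sets, and the standard back-and-forth argument (orbit-equivariant, carried out on the complement of the union of the countably many null sets that appear in the iterated pre-images) produces an equivariant Borel isomorphism between full subsets of $X_1$ and $X_2$.

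The main obstacle is clearly the Borel, uniform-in-$\mu$ embedding step in part (1): getting a single Borel map that simultaneously realises the Krieger embedding for every ergodic invariant measure of entropy less than $t$, on a genuinely full set. Everything else (the entropy bookkeeping on $X'$, and the Cantor--Bernstein conclusion) is comparatively routine once that embedding theorem is in hand.
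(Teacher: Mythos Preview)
Your proposal is essentially the same as the paper's proof sketch: Cantor--Bernstein for (2), and for (1) a reduction to a Borel, uniform-in-$\mu$ version of Krieger's generator theorem to embed any $(Y,S)\in\mathcal B_t$ into a mixing SFT inside $X$ of entropy close to $t$.

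One sharpening of your description of the hard step is worth recording. The paper points out (following Weiss's embedding into the countable-alphabet full shift) that the Krieger argument \emph{already} yields a single Borel map which, restricted to the generic points of any one ergodic measure, is injective with finitary inverse. So the obstacle is not producing ``a single Borel map defined on a full subset'' --- that comes essentially for free --- but rather augmenting the coding so that the map is injective on the \emph{union} over all ergodic measures of the generic sets, i.e., so that points generic for distinct measures cannot collide under the map. Your final paragraph gestures at this, but the paper is more specific about where the difficulty actually sits.
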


\begin{proof} We give a brief indication of how the proof goes.

(2) Not hard. (Cantor-Bernstein argument of set theory works in this category.)

(1) As follows:

{\it   (I) Broad strategy.}
B.   Weiss showed a free Borel system has a countable generator,
hence embeds to a subsystem of $((1,2,... )^{\Z}, \sigma)$, the
full shift over a countable alphabet.
  Then, it's not hard to show, 
  given $t>t-\epsilon >0$ and a mixing
  SFT of entropy $t$,  
  that 
  it suffices to find 
 $B$  a Borel subsystem of
$((1,2,... )^{\Z}, \sigma)$  supporting all 
  the ergodic measures of entropy $\leq t-\epsilon$,
  and Borel embed $B$ into that SFT. 

{\it (II) Finer strategy.} 
\lq\lq Observe\rq\rq:  the argument of the Krieger generator theorem
gives a Borel map which on the generic points of 
every ergodic Borel measure, individually, 
is injective with finitary inverse. 

{\it (III) Hard construction step.}  Augment  the coding  of that map to
make it injective on the union of all those supports. 

\end{proof}

  \begin{corollary}\label{corollarylist}
    \cite{H2} For every  $(X,T)$ of equal positive entropy $t$
  from the following collection, 
$(X',T)$ is the same strictly $t$-universal system: 
mixing SFTs, mixing countable state Markov shifts, mixing finitely presented
systems, Anosov diffeomorphisms ...
\end{corollary}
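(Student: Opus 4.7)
The plan is to deduce the corollary directly from Theorem \ref{theoremH2}. Part (2) of that theorem asserts that any two strictly $t$-universal Borel systems are isomorphic modulo null sets, so it suffices, for each system $(X,T)$ in the listed collection, to verify the hypothesis of part (1): namely, that $(X,T)$ contains mixing SFT subsystems with topological entropies arbitrarily close to $t$. Once that hypothesis is checked class by class, part (1) produces strict $t$-universality of $(X',T)$, and part (2) identifies all such $(X',T)$ as a common Borel system up to null sets.

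The verification is a class-by-class invocation of a classical approximation result. For mixing SFTs, Krieger's embedding theorem supplies mixing SFT subsystems of every entropy strictly below $t$ (subject to the usual period compatibility). For mixing countable state Markov shifts of Gurevich entropy $t$, Gurevich's theorem identifies $t$ as the supremum of entropies of finite irreducible sub-Markov shifts, each of which is (up to a period) a mixing SFT. A mixing finitely presented system sits as a bounded-to-one factor of a mixing SFT, and approximating mixing SFT subsystems transfer between the cover and the quotient using standard symbolic constructions. For Anosov diffeomorphisms — and more generally for the smooth items on the list — horseshoe approximation (Katok's theorem in low dimensions, or Bowen's spectral decomposition combined with the variational principle in general) produces compact hyperbolic basic sets topologically conjugate to mixing SFTs whose entropies approach $t$ from below.

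The substantive work has already been done inside Theorem \ref{theoremH2}; what remains here is essentially bookkeeping, gathering the correct approximation input in each category. The only step that is not purely symbolic, and so the main obstacle worth flagging, is the passage from the smooth category to symbolic subsystems via horseshoes; but this uses only classical hyperbolic-dynamics machinery and contributes no new Borel-theoretic difficulty.
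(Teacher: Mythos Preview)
Your proposal is correct and follows exactly the intended route: the paper offers no separate argument for the corollary, treating it as an immediate consequence of Theorem~\ref{theoremH2} once one knows that each listed class of mixing systems contains mixing SFT subsystems of entropy arbitrarily close to $t$, and your class-by-class verification supplies precisely that input. One small remark: for Anosov diffeomorphisms (and more generally mixing finitely presented systems) the most direct route is via Markov partitions rather than Katok-type horseshoe approximation, but either works and the distinction is immaterial here.
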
 

  Corollary \ref{corollarylist}
  leaves open the nature of measures of maximal entropy for
  the listed systems. For example, the mixing Markov shifts which are not
  positive recurrent have no measure of maximal entropy. But, the 
  other systems listed have   a unique measure of maximal entropy,
which is Bernoulli. This gives the following. 

\begin{corollary}\label{list2} \cite{H2}
    Entropy is a complete
  invariant of Borel isomorphism modulo periodic points and null sets
    among  systems in the following collection:
  mixing SFTs, mixing positive recurrent Markov shifts, mixing finitely presented
systems, Anosov diffeomorphisms ...
\end{corollary}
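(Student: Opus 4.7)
The plan is to combine Corollary \ref{corollarylist} with classical ergodic theory on the piece of the space that it excludes. Given $(X,T)$ in the listed collection with entropy $t>0$, let $\mu$ denote its unique measure of maximal entropy, let $Y$ be the $T$-invariant Borel set of $\mu$-generic points, and recall that $X'$ in Theorem \ref{theoremH2} is obtained from $X$ by removing periodic points and all points generic for an ergodic MME. Since $\mu$ has entropy $t>0$ while periodic orbits carry only atomic measures of entropy zero, no periodic point is $\mu$-generic, so modulo $X_p$ the space decomposes as the disjoint $T$-invariant Borel union $X'\sqcup Y$.

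For two such systems $(X_1,T_1)$, $(X_2,T_2)$ of the same entropy $t$, Corollary \ref{corollarylist} immediately furnishes a Borel isomorphism $\phi'\colon X_1'\to X_2'$ modulo null sets. To handle the leftover pieces $Y_1$ and $Y_2$: on $Y_i$ the only $T_i$-invariant Borel probability is $\mu_i$ itself, so ``null'' inside $Y_i$ simply means $\mu_i$-null. For each of the four listed classes the MME $\mu_i$ is Bernoulli, and both have the same entropy $t$, so by Ornstein's isomorphism theorem there is a measure-theoretic isomorphism $(Y_1,\mu_1,T_1)\cong (Y_2,\mu_2,T_2)$. Any such isomorphism can be realized as a Borel bijection between $T_i$-invariant Borel subsets of full $\mu_i$ measure, yielding a Borel isomorphism $\phi''\colon Y_1\to Y_2$ modulo null sets. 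Gluing $\phi'$ and $\phi''$ on the disjoint pieces produces a Borel isomorphism $X_1\to X_2$ modulo periodic points and null sets.

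The converse direction is soft: entropy, defined as the supremum of $h_\mu(T)$ over $\mu\in\mathcal M(T)$, is obviously preserved by Borel isomorphism mod null sets. The only real obstacle is bookkeeping -- checking that the two notions of ``null'' (null for every invariant probability on $X'$, and $\mu_i$-null on $Y_i$) assemble into a genuine null set for the full system, which follows from the fact that every ergodic invariant probability of $(X,T)$ is either the MME (handled on $Y$) or has entropy strictly less than $t$ (handled on $X'$). Since the heavy machinery sits inside Theorem \ref{theoremH2} and Ornstein's theorem, the corollary is really an assembly exercise rather than a new construction.
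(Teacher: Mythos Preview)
Your argument is correct and is exactly the approach the paper indicates in the sentence immediately preceding the corollary: split off the generic points for the unique Bernoulli measure of maximal entropy, handle that piece by Ornstein's theorem, and handle the remainder $X'$ via Corollary~\ref{corollarylist}. The paper gives no more detail than that one sentence, so your write-up is a faithful expansion of the intended proof.
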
 

We see that Hochman not only resolved Buzzi's entropy conjugacy question, but
with a more insightful viewpoint proved something much more general and
fundamental. 

There have been further developments in this vein. 
Recall that Hochman's argument begins with
the Krieger generator theorem \cite{Krieger70}, using mixing SFTs.
Quas and Soo
\cite{QuasSoo16} proved a
 generator theorem for a much more general class, the homeomorphisms
of compact metric spaces satisfying (i) almost weak specification, (ii) the
small boundaries property and (iii) asymptotic h-expansiveness (or more generally
upper semicontinuity of the entropy map on invariant Borel probabilities). 
The Quas-Soo theorem was striking both in its generality and in its liberation
from shifts of finite type -- it applies to many systems, such as quasihyperbolic
toral automorphisms, which contain no infinite shift of finite type.

B. Weiss (unpublished) showed the requirement of (iii) in the Quas-Soo Theorem
could be dropped. Recently,
Chandgotia and Meyerovitch (to be posted) removed the requirement (ii) and,
dramatically, prove the universality result of Hochman, but with
mixing  systems with almost weak specification in place of mixing
SFTs.\begin{footnote}{{\it Note added in proof:}
This result is also contained in the work
\cite{burguet} of David Burguet, appearing in the 2019
arxiv post \cite{burguetarxiv}. The work
\cite{ChanMey} of Chandgotia and Meyerovitch was
presented
at an August 2018 PIMS workshop; the Burguet paper
\cite{burguet}  wasf
submitted to ETDS in August 2017. Neither of the
independent works \cite{burguet,ChanMey} contains the
other.}\end{footnote}
  Chandgotia and Meyerovitch also generalize the result to $\Z^d$ actions. 
  Altogether, these works comprise a remarkable advance in our understanding of
  how positive entropy in a class of systems can force dynamical complexity.

Away from mixing systems, Hochman's universality
approach to isomorphism on full sets
can become more complicated, but remain useful (see e.g. \cite{BB}).

%

\subsection{Borel dynamics beyond probabiities}

  For a Borel system $(X,T)$, 
  we have seen the success of Hochman's universality approach
  to the action of $T$ on  $X_g$. What about $X_d$?
  Although $(X_d,T)$ supports no invariant Borel probability,
  it may still admit complicated dynamics (for example,
  $\sigma$-finite or nonsingular invariant measures). 
Commonly, 
$X_d$ will be a dense $G_{\delta}$ in a topological space $X$.  
One might think of $X_d$ as the  \lq\lq dark matter\rq\rq : 
 $X_d$ is often large,  and the dynamics of $(X_d,T)$ 
can be hard to see.  

Let $\mathcal D$ be the class of Borel
systems admitting no invariant Borel probability.
The study of the class $\mathcal D$  began 
with Shelah and Weiss
\cite{Shelahweiss,Weiss84,Weiss89}, and then Kechris and
others (see \cite{DoughertyJacksonKechris1994,JacksonKechrisLouveau2002}). 
  Earlier, Krengel had shown that for any infinite  measure preserving 
  ergodic  conservative system, there is a two-set generator. (This
  is a.e., neglecting measure zero sets.) Weiss's 1989 paper
  \cite{Weiss89} included the following question
  (this question was later asked for a general countable group in
  \cite[Problem 5.7]{JacksonKechrisLouveau2002}).

\begin{question} \cite{Weiss89} Suppose a Borel system admits no
 invariant Borel probability. Must it have a finite generator?
  A 2-set generator? 
\end{question}   

This was a rather fundamental, longstanding question.
The title of Hochman's paper 
 \cite{H3} gives the answer: 
\lq\lq Every Borel automorphism without finite invariant
  measure admits a two-set generator.\rq\rq

  We  note here the related theorem of Tserunyan:

  \begin{theorem} 
 \cite{Tserunyan}
  Suppose $G$ is an arbitrary countable group, acting by homeomorphisms
  on a $\sigma$-compact Polish space, which does not admit an invariant Borel
  probability. Then $G$ has a finite generator.
  (In fact, a 32-generator.)
\end{theorem}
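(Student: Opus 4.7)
The plan is to extend Hochman's Borel $2$-generator theorem \cite{H3} to a general countable group by combining the Becker--Kechris/Nadkarni dichotomy (no invariant Borel probability is equivalent to compressibility) with the topological hypothesis of $\sigma$-compactness, which provides the regularity needed to turn a measurable argument into a Borel one. As a preliminary I would reduce to free actions via a standard analysis of the stabilizer map $x\mapsto G_x$: this map is Borel, takes only countably many values, and partitions $X$ into Borel pieces on which $G_x$ is constant, so a small fixed sub-alphabet can record the stabilizer type and the remaining work takes place on the free part.

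On the free part, compressibility supplies a Borel injection $\pi\colon X\to X$ with $\pi(x)\in G\cdot x$ and $X\setminus\pi(X)$ meeting every orbit; iterating $\pi$ gives a nested sequence of Borel complete sections $X\supseteq\pi(X)\supseteq\pi^{2}(X)\supseteq\cdots$. Combined with a compact exhaustion $X=\bigcup_{n}K_{n}$ and continuity of the $G$-action, this should produce a Borel marker lemma for $G$-actions on $\sigma$-compact Polish spaces: arbitrarily sparse Borel complete sections with compact closures. The generator is then built as a Borel partition $X=\bigsqcup_{i=1}^{32}A_{i}$ whose $32$ colors split into three roles: a handful for the marker pattern that singles out a fixed sparse complete section; a handful for a Borel ``clock'' that reads off, from the marker pattern traversed along the orbit, the group element $c(x)\in G$ taking $x$ to the nearest marker point; and the rest for a bit-by-bit encoding, along the orbit, of a countable Borel refinement of the marker section, enough to separate distinct marker points. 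A point $x$ is recovered from the coloring of its orbit by reading marker, then clock to identify $c(x)$, then cell to identify which marker point the orbit sits over.

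The chief obstacle is the marker/clock construction under an absolutely bounded alphabet. For $\mathbb{Z}$ one exploits the linear order to sparsify markers and directly read the distance to the nearest marker, but for a general countable group no canonical order or metric on orbits is available, so one must build, Borel-uniformly over all orbits, a combinatorial structure on each orbit (a canonical tiling by finite patches together with an ordering of each patch) rigid enough for bounded color information to recover position. Making this work using only $\sigma$-compactness and continuity of the action, and squeezing the total alphabet down to the explicit bound $32$, is where the delicate bookkeeping of the proof lives and where the specific numerical constant is extracted.
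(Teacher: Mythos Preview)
The paper does not contain a proof of this theorem; it is stated and attributed to Tserunyan, with no argument given. What the paper \emph{does} say is directly relevant to your proposal: immediately after stating the result, it remarks that Hochman's and Tserunyan's theorems ``seem quite different'' and that ``certainly, the proofs are very different.'' Your plan is, in essence, to prove Tserunyan's theorem by transporting Hochman's $\mathbb{Z}$-specific marker/clock machinery to an arbitrary countable group, with $\sigma$-compactness inserted only as a mild regularity ingredient. That is precisely the route the paper tells you is \emph{not} the one taken: Tserunyan's argument uses the topological hypothesis in an essential, structural way (local sections coming from continuity on a $\sigma$-compact space), rather than as a decoration on a compressibility/marker scheme.

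There is also a genuine gap in the outline itself. You correctly identify the ``chief obstacle'': building, Borel-uniformly over every orbit of an arbitrary countable group, a canonical finite tiling-with-order rigid enough that a bounded alphabet recovers position. Nothing in your sketch explains how to do this. Compressibility (Nadkarni) gives you complete sections, but not the canonical combinatorial geometry on orbits that Hochman's $\mathbb{Z}$-argument extracts from the linear order; and your invocation of $\sigma$-compactness (``compact exhaustion $\ldots$ should produce a Borel marker lemma'') is too vague to fill this hole. In Tserunyan's proof, by contrast, $\sigma$-compactness and continuity do the heavy lifting that replaces any need for such tilings, and the constant $32$ falls out of that specific construction, not from a Hochman-style coding. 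As written, your proposal is a plausible wish list rather than a proof strategy, and it diverges from the approach the paper attributes to the cited source.
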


  Tserunyan's  theorem is remarkable for the generality of the acting group.
  However, not every $\Z$-action on a Borel space is Borel conjugate
  to a continuous action on a $\sigma$-compact space
  \cite{ConleyKechrisMiller2013}.
  The two theorems seem quite different.
  Certainly, the proofs are very different. 

  Hochman's proof of the 2-generator theorem is much harder than
  the proof of Theorem \ref{theoremH2}. 
  \cite{H2}. A big problem is to even find a strategy. We refer to
\cite{H3} for a clear explanation, and a list of compelling open problems.  
The paper also contains the following  striking result.   

\begin{theorem} \label{theoremboreluniv} \cite{H3}
%
%
  If a Borel system
  $(X,T)$ contains an infinite 
  mixing SFT, then $(X_d,T)$ is the unique
  (up to Borel conjugacy) Borel system in $\mathcal D$ 
  into which every system in $\mathcal D$ embeds.
\end{theorem}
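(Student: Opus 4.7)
The plan is to treat existence and uniqueness separately, using the two-set generator theorem of \cite{H3} (the main result of that paper) to reduce existence to a single construction inside the given mixing SFT.

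For uniqueness, if $(U,T_U)$ and $(U',T_{U'})$ are two systems in $\mathcal D$ each receiving every member of $\mathcal D$ as a Borel subsystem, then each embeds into the other, and the equivariant Borel Schroeder--Bernstein argument used for Theorem~\ref{theoremH2}(2) produces an equivariant Borel isomorphism. So everything reduces to showing that $(X_d,T)$ itself is universal.

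For existence, given $(Y,S)\in\mathcal D$, the two-set generator theorem supplies an injective Borel equivariant map $\varphi\colon Y\hookrightarrow(\{0,1\}^{\Z},\sigma)$. Choosing the generating family for $Y$ to be the pullback under $\varphi$ of a countable generating family for the two-shift, the orbital frequencies of $y$ coincide with those of $\varphi(y)$; since $Y$ admits no invariant Borel probability, $\varphi(y)$ is not $\sigma$-generic for any invariant probability, and so $\varphi(Y)\subseteq(\{0,1\}^{\Z})_d$. A parallel argument, exploiting that empirical measures along an orbit inside the infinite mixing SFT $M\subseteq X$ concentrate on the closed invariant set $M$, gives $M_d\subseteq X_d$. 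It therefore suffices to construct an injective Borel equivariant map
\[
\Psi\colon (\{0,1\}^{\Z})_d \hookrightarrow M_d.
\]

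The construction of $\Psi$ is the main obstacle. The proposed approach is Krieger-style marker-and-filler coding executed in the Borel, rather than the topological, category. Using mixing and infiniteness of $M$, choose two periodic orbits of coprime periods, and for each sufficiently large $n$ a pair of distinct admissible length-$n$ blocks $B_0(n),B_1(n)$ traversing them, with admissible transition words between them supplied by mixing. A canonical shift-equivariant Borel marker scheme on $\{0,1\}^{\Z}$ parses each bi-infinite word into blocks whose lengths may grow along the orbit, and each bit is encoded by placing $B_0$ or $B_1$ with a transition. The scheme is designed so that the parse, and hence the bits, is recoverable from the $M$-image (yielding injectivity); equivariance is built into the parsing rule; and $\Psi((\{0,1\}^{\Z})_d)\subseteq M_d$ follows from the same frequency-transport argument applied to $\Psi$ itself. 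What distinguishes this from the full-set universality of Theorem~\ref{theoremH2} is the absence of measure-theoretic genericity: every orbit in $(\{0,1\}^{\Z})_d$ exhibits erratic empirical statistics, but a single uniform Borel rule must handle all such orbits simultaneously and remain injective across them. The marker technology developed in \cite{H3} for the two-set generator theorem itself is the natural instrument for designing the parse.
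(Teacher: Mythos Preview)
The survey paper gives no proof of this theorem; it merely quotes the result from \cite{H3}. So there is nothing in the paper to compare against, and I can only assess your argument on its own merits. Your uniqueness step via equivariant Cantor--Bernstein is fine, and the inclusion $M_d\subseteq X_d$ is correctly argued.

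There is, however, a genuine gap in the existence half, at the line ``since $Y$ admits no invariant Borel probability, $\varphi(y)$ is not $\sigma$-generic for any invariant probability.'' This inference is false. Let $z\in\{0,1\}^{\Z}$ be generic for the $(\tfrac12,\tfrac12)$ Bernoulli measure and set $Y=\{\sigma^n z:n\in\Z\}$ with $\varphi$ the inclusion. Then $(Y,\sigma)\in\mathcal D$ (a single infinite orbit carries no invariant probability), yet $\varphi(z)=z\in(\{0,1\}^{\Z})_g$. The point is that well-defined orbital frequencies along $y$, with respect to your pulled-back generating family, yield only a finitely additive set function on an algebra; nothing forces it to be a countably additive probability on $Y$, and in the single-orbit example all the generating frequencies are zero. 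So $\varphi(Y)$ need not lie in $(\{0,1\}^{\Z})_d$, and your composition $\Psi\circ\varphi$ is not defined on all of $Y$. The same objection recurs for $\Psi$ itself: you assert $\Psi((\{0,1\}^{\Z})_d)\subseteq M_d$ ``by the same frequency-transport argument,'' but a Borel (not topological) embedding need not reflect non-convergence of empirical measures, and an image point of a non-generic $x$ can perfectly well be generic for a measure that gives the Borel image set measure zero. Closing these gaps is not a touch-up: it is essentially the content of the theorem, and in \cite{H3} it is handled by the orbit-level marker machinery of the two-set generator construction itself, not by an a posteriori frequency argument.
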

   
So, for many familiar systems,
   (e.g., those on the list of
Corollary \ref{corollarylist}),
 $(X_d,T)$ is the
  same universal \lq\lq dark matter\rq\rq\  Borel dynamics. 
  (Whatever that is ... )

  \begin{corollary} \cite{H3} Suppose two homeomorphisms 
 of equal finite positive entropy 
  are mixing and lie in any of  following classes:
  SFT, sofic shift,   positive recurrent
  countable state Markov shift,
  finitely presented system (e.g. Anosov homeomorphism,  or
  Axiom A on a basic set).

  Then their restrictions to the complement of the periodic
  points  are Borel isomorphic. 
\end{corollary}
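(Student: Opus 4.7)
\emph{Proof proposal.} The plan is to glue two Borel isomorphisms supplied by the two Hochman theorems just stated. Decompose $X\setminus X_p = (X_g\setminus X_p)\sqcup X_d$ and similarly for $Y$; a disjoint union of piecewise Borel isomorphisms will give the corollary. So it suffices to match the generic (probability-bearing) piece on each side, and separately match the dark piece on each side, and then glue.

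First I would verify the standing hypothesis of both Hochman theorems: each class on the list contains mixing SFTs of entropies arbitrarily close to $t$, and in particular an infinite mixing SFT. Case by case: mixing SFTs are their own witness; mixing sofic shifts approximate their entropy by mixing SFTs lifted through the Fischer (right-resolving) cover; mixing positive recurrent countable-state Markov shifts do so by restriction to irreducible finite subgraphs of the transition graph whose Gurevich entropies approach $t$; and mixing finitely presented systems (Anosov or Axiom A basic sets) do so via Bowen's Markov partitions, which present them as almost-conjugate images of mixing SFTs. I would also record that in each of these classes the unique measure of maximal entropy exists and is Bernoulli (Parry for SFT, Gurevich--Sarig for positive recurrent Markov, Bowen for finitely presented, and the sofic case by pushing down the SFT MME on the cover).

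These ingredients place each system within the scope of both Theorem \ref{theoremH2} and Theorem \ref{theoremboreluniv}. On the generic piece, Theorem \ref{theoremH2}---combined with the Bernoulli MME fact above, exactly as in the derivation of Corollary \ref{list2}---supplies a Borel isomorphism $\varphi: U_X\to U_Y$ between conull Borel subsets of $X\setminus X_p$ and $Y\setminus Y_p$. The exceptional Borel sets $N_X = (X\setminus X_p)\setminus U_X$ and $N_Y = (Y\setminus Y_p)\setminus U_Y$ are null for every invariant Borel probability and therefore lie in the class $\mathcal D$. On the dark piece, Theorem \ref{theoremboreluniv} identifies both $(X_d,T)$ and $(Y_d,S)$ with the unique universal object in $\mathcal D$, and in particular absorbs each of $N_X$ and $N_Y$ as a Borel subsystem.

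It remains to upgrade these two partial matchings into one honest Borel isomorphism of $X\setminus X_p$ onto $Y\setminus Y_p$. A Cantor--Bernstein argument in the Borel category---the same elementary maneuver used to prove clause (2) of Theorem \ref{theoremH2}---lets me reshuffle $\varphi$ and the identification $X_d\cong Y_d$ so that the leftovers $N_X$ and $N_Y$ are matched to each other rather than embedded haphazardly into the dark matter. I expect this last bookkeeping to be the main (though not deep) technical obstacle: the real content of the corollary is carried by the two Hochman theorems and by the well-known Bernoullicity of the MME in each listed class.
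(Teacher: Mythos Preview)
Your proposal is correct and follows exactly the route the paper intends: the survey does not spell out a proof, but places the corollary immediately after Theorem~\ref{theoremboreluniv} precisely because it is meant to be read as the combination of Corollary~\ref{list2} (Borel isomorphism off periodic points modulo null sets, using strict $t$-universality plus Bernoullicity of the unique MME) with Theorem~\ref{theoremboreluniv} (the leftover null pieces are absorbed by the universal object in~$\mathcal D$). Your case check that each listed class contains mixing SFTs of entropy approaching $t$ and has Bernoulli MME, and your closing Cantor--Bernstein absorption of the residual null sets $N_X,N_Y$ into the universal dark-matter system, are exactly the bookkeeping the paper leaves implicit.
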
 

There is now  an obvious question.
  \begin{question} 
    Suppose $(X,T)$ is a mixing homeomorphism of an
    infinite compact metric
  space satisfying almost weak specification. 
  Must     $(X_d,T)$ be the universal \lq\lq dark matter\rq\rq\ dynamics?
  \end{question}
  
 \bibliographystyle{plain}
\bibliography{hb}

\end{document}